\DeclareMathOperator{\divv}{div}
\newcommand{\Rbb}{\mathbb{R}}
\newcommand{\tg}{\tilde{g}}
\newcommand{\tX}{\widetilde{X}}
\newcommand{\talpha}{\tilde{\alpha}}
\newcommand{\tv}{\tilde{v}}
\newcommand{\tnabla}{\widetilde{\nabla}}
\newcommand{\tH}{\widetilde{H}}
\newcommand{\tdelta}{\tilde{\delta}}
\newcommand{\ta}{\tilde{a}}
\newcommand{\tmu}{\tilde{\mu}}
\newcommand{\tE}{\tilde{E}}
\newtheorem{theorem}{Theorem}[]
\newtheorem{lemma}[theorem]{Lemma}
\newtheorem{proposition}[theorem]{Proposition}
\newtheorem*{conjecture*}{Conjecture}
\newtheorem{theo}{Theorem}
\newtheorem{cor}[theo]{Corollary}
\theoremstyle{definition}
\newtheorem*{remark*}{Remark}
\newtheorem{remark}[theorem]{Remark}
\numberwithin{figure}{section}
\title[On the stability of free boundary minimal submanifolds in conformal domains]{On the stability of free boundary minimal submanifolds in conformal domains}
\author[Alcides de Carvalho, Roney Santos and Federico Trinca]{Alcides de Carvalho, Roney Santos and Federico Trinca}
        \address{Universidade Federal de Alagoas\\
		Instituto de Matem\'atica\\
		Campus A. C. Sim\~oes, BR 104 - Norte, Km 97, 57072-970, Macei\'o - AL, Brazil.}	
	\email{alcides.junior@im.ufal.br}
	\address{Universidade de S\~ao Paulo\\
		Departamento de Matem\'atica\\
        Rua do Mat\~ao, 05508-900, S\~ao Paulo - SP, Brazil.}
	\email{roneypsantos@ime.usp.br}
         \address{Department of Mathematics\\ UBC \\ 1984 Mathematics Road, Vancouver Canada.}
            \email{ftrinca@math.ubc.ca}
\begin{document}

\begin{abstract}
    Given a $n$-dimensional Riemannian manifold with non-negative sectional curvatures and convex boundary, that is conformal to an Euclidean convex bounded domain, we show that it does not contain any compact stable free boundary minimal submanifold of dimension $2\leq k\leq n-2$, provided that either the boundary is strictly convex with respect to any of the two metrics or the sectional curvatures are strictly positive.
\end{abstract}

\maketitle

\section*{Introduction}
A fundamental topic in differential geometry is the study of submanifolds that are critical points of some functional within a given Riemannian manifold. The classical examples of such objects are \emph{minimal submanifolds}, i.e., the critical points of the volume functional. Indeed, minimal submanifolds have been studied since 1760 and remain among the most active research fields in geometry, with several applications to topology and mathematical physics.

In recent years, when the compact Riemannian manifold $M$ has non-empty boundary $\partial M$, critical points of the volume functional among submanifolds whose non-empty boundaries lie in $\partial M$ attracted particular attention. Such objects are called \emph{free boundary minimal submanifolds} and are characterized by having vanishing mean curvature and by meeting $\partial M$ orthogonally along their boundary. Being variational objects, it is also natural to look at the second derivative of the functional and, in particular, at the free boundary minimal submanifolds with nonnegative second derivative, which are called \emph{stable}. The free boundary minimal submanifolds that are not stable are called \emph{unstable}. 

In this context, it is well-known that the positivity of the Riemann curvature tensor of $M$ and the convexity of $\partial M$, either individually or together, imply that free boundary minimal submanifolds must be unstable. For instance, the classical work of Simons \cite{Simons1968} implies that in a $p$-convex domain of the round sphere, any $k$-dimensional free boundary minimal submanifold is unstable if $k\leq n-p$. More generally, one can deduce from \cite[Proposition 2]{LawsonSimons} and \cite[Theorem 2]{ShenXu2000} that the same holds true for Riemannian manifolds whose sectional curvatures lie in the interval $(\frac{1}{4},1]$, called {\it $\frac 1 4$-pinched manifolds}, that have $p$-convex boundary and that are isometrically immersed as hypersurfaces of some Euclidean space (cfr. \cite{HuWei2003, ShenHe2001} for improved results in this direction). Analogously, Fraser \cite{Fraser2000,FraserIndex} proved the same conclusion for strictly $p$-convex Euclidean domains.
The cases of geodesics, $2$-dimensional disks and hypersurfaces are better understood, where Morse index estimates are known for more general ambient manifolds (cfr. \cite{AmbrozioCarlottoSharp2018,MisteliFranz2024,Fraser2000,FraserIndex,FraserLi,Lima2022,LimaMenezes,Medvedev,MooreSchulteIndex,Sargent2017,Schoen} and references therein).

In this work, we generalise both Simons' result on domains of the round sphere and Fraser's on Euclidean domains as follows.
\begin{theo}\label{maintheorem}
    Let $(\Omega,\tg)$ be a $n$-dimensional Riemannian manifold with $p$-convex boundary and non-negative sectional curvatures that is conformal to a $p$-convex bounded domain $(\Omega,g)$ of $\Rbb^n$. If either:
    \begin{enumerate}
        \item[$\mathrm{(i)}$] $(\Omega,\tg)$ has positive sectional curvature;
        \item[$\mathrm{(ii)}$] $\partial\Omega$ is strictly $p$-convex with respect to $g$;
        \item[$\mathrm{(iii)}$] $\partial\Omega$ is strictly $p$-convex with respect to $\tg$,
    \end{enumerate}
    then $(\Omega, \tg)$ does not contain $k$-dimensional compact stable free boundary minimal submanifolds for all $2\leq k \leq \min\{n-2,n-p\}.$
\end{theo}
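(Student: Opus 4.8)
The plan is to run a Lawson--Simons type argument: I would test the stability (second variation) inequality with the normal projections of the Euclidean coordinate vector fields and sum over all coordinate directions. Write $\tg=e^{2u}g$ for the conformal factor relating the two metrics, and let $E_1,\dots,E_n$ be the standard parallel frame of $(\Omega,g)\subset\Rbb^n$. The structural fact I would exploit is that each $E_a$ is a conformal vector field for $\tg$: since translations are Killing for the flat metric, $\mathcal L_{E_a}\tg=2(E_a u)\,\tg$. This is what plays the role of the restriction of ambient parallel fields used by Lawson--Simons on the round sphere and by Fraser in Euclidean domains, and it is the feature that lets a single computation cover the conformal case.

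Assume, for contradiction, that $\Sigma^k\subset(\Omega,\tg)$ is a compact stable free boundary minimal submanifold with $2\le k\le\min\{n-2,n-p\}$. Along $\Sigma$ I decompose $E_a=E_a^\top+E_a^\perp$ with respect to $\tg$. Because $\Sigma$ meets $\partial\Omega$ orthogonally, the normal space of $\Sigma$ is contained in $T\partial\Omega$ along $\partial\Sigma$, so every $E_a^\perp$ is an admissible variation field; after normalising by the conformal factor the family $\{E_a^\perp\}$ resolves the normal bundle, i.e.\ $\sum_a E_a^\perp\otimes E_a^\perp$ is a positive multiple of the orthogonal projection onto $N\Sigma$. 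I would feed $V=E_a^\perp$ into the index form
\[
Q(V,V)=\int_\Sigma\Big(|\tnabla^\perp V|^2-|\tilde{A}^{V}|^2-\sum_{i=1}^k\langle\tilde{R}(V,e_i)e_i,V\rangle\Big)\,dV_{\tg}-\int_{\partial\Sigma}\tilde{\mathrm{II}}(V,V)\,dA_{\tg}
\]
and sum over $a=1,\dots,n$, where $\tilde{\mathrm{II}}$ is the second fundamental form of $\partial\Omega$ oriented so that convexity means $\tilde{\mathrm{II}}\ge0$.

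The heart of the proof is to show that $\sum_a Q(E_a^\perp,E_a^\perp)$ collapses to a curvature integral plus a boundary integral of definite sign. Writing $\tnabla^\perp_{e_i}E_a^\perp=(\tnabla_{e_i}E_a)^\perp-\tilde{A}(e_i,E_a^\top)$ and expanding, the pure second fundamental form contributions cancel exactly as in the flat Lawson--Simons identity, using minimality ($\sum_i\tilde{A}(e_i,e_i)=0$) and the resolution of the tangent bundle by the $E_a^\top$; this is where $\sum_a E_a\otimes E_a$ being the flat identity is used. The remaining terms are the conformal corrections $(\tnabla_{e_i}E_a)^\perp$, which by the conformal Killing identity are governed by derivatives of $u$, and I expect them to reorganise---together with the explicit curvature term---into exactly $-\sum_{i=1}^k\sum_\alpha \tilde K(e_i,\nu_\alpha)$ for a $\tg$-orthonormal normal frame $\{\nu_\alpha\}$, which is $\le0$ by non-negativity of the sectional curvatures. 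For the boundary term, at each point of $\partial\Sigma$ the projections $E_a^\perp$ span $N\Sigma$, a subspace of $T\partial\Omega$ of dimension $n-k$, so $\sum_a\tilde{\mathrm{II}}(E_a^\perp,E_a^\perp)$ is the trace of the second fundamental form of $\partial\Omega$ over an $(n-k)$-plane; this is $\ge0$ precisely because $n-k\ge p$ and $\partial\Omega$ is $p$-convex. Hence $\sum_a Q(E_a^\perp,E_a^\perp)\le0$, contradicting stability unless every term vanishes.

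It remains to upgrade this to a strict inequality under each hypothesis, which forces some $Q(E_a^\perp,E_a^\perp)<0$ and completes the contradiction. Under (i) the curvature integrand is strictly negative wherever $N\Sigma\neq0$, which holds everywhere since $n-k\ge2$. Under (iii) the boundary integrand is strictly positive. Under (ii) I would transfer strict $g$-convexity to the $\tg$ boundary term through the conformal transformation law for the second fundamental form of $\partial\Omega$, which shifts the principal curvatures by a multiple of $\partial_\nu u$; one checks that the induced sign survives along $\partial\Sigma$. The step I expect to be the genuine obstacle is the interior computation: tracking the conformal-factor corrections to $\tnabla^\perp E_a^\perp$ and verifying that, after summing over $a$ and invoking minimality, they assemble precisely into the sectional curvature of $\tg$ with no leftover indefinite terms. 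Controlling the free boundary term in case (ii), where the conformal correction to $\tilde{\mathrm{II}}$ must not destroy the sign coming from $g$-convexity, is the secondary delicate point.
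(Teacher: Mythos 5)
Your test fields and overall tracing strategy are exactly the paper's (conformally rescaled constant fields projected to $N\Sigma$), but the step you yourself flag as ``the genuine obstacle'' is a real gap, and it does not resolve the way you predict. The interior trace does \emph{not} collapse to a pure curvature term: the paper's \cref{ST} shows that, pointwise on $\Sigma$,
\[
e^{2u}\sum_{\ell=1}^n S_{\tg}(\tE_\ell^\perp,\tE_\ell^\perp) \;=\; k\,|\nabla^\perp u|^2_g \;-\; e^{2u}K_{\tg}(T\Sigma,N\Sigma),
\]
and the leftover term $k|\nabla^\perp u|^2_g\ge 0$ has the \emph{wrong} sign for your argument (with the unrescaled fields $E_\ell^\perp$ it is worse: one picks up $(n-k)|\nabla^\top u|^2_g+k|\nabla^\perp u|^2_g$). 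So even with $K_{\tg}\ge 0$ the summed index form is not pointwise non-positive, and the contradiction you want does not follow. The conformal Killing identity $\mathcal{L}_{E_a}\tg=2(E_a u)\,\tg$ is true but cannot repair this: what is needed is the full conformal-to-flat curvature structure, not just the conformality of the $E_a$.

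What the paper does instead (\cref{inequality}) uses the flatness of $g$ a second time, in the \emph{normal--normal} directions: since $K_g=0$, the conformal formula for the sectional curvatures $K_{\tg}(E_r,E_s)$, $r,s>k$, can be solved for the normal trace of the Hessian $\sum_{r>k}\nabla^2u(E_r,E_r)$ --- this is precisely where $k\le n-2$ enters, as one divides by $n-k-1$. Substituting this back into the tangent--normal trace, the bad term $k|\nabla^\perp u|^2_g$ is absorbed, and what remains is, up to terms of favourable sign (here $k\ge 2$ is used), the tangential divergence $2\divv_\Sigma\big(e^{(k-2)u}\nabla^\top u\big)$; the divergence theorem then gives
\[
\sum_{\ell=1}^n\int_\Sigma S_{\tg}(\tE_\ell^\perp,\tE_\ell^\perp)\,d\tmu \;\le\; 2\int_{\partial\Sigma}\nu_{\tg}(u)\,d\ta .
\]
This produces an \emph{extra boundary term} that your scheme never sees, and it must then be played off against the boundary convexity: the $\tg$-convexity of $\partial\Omega$ applied to the trace of $T_{\tg}$ in \cref{ST} gives $(n-k)\nu_{\tg}(u)\ge\sum_\ell e^{-u}g(\alpha_{\partial\Omega}(E_\ell^\perp,E_\ell^\perp),\nu_g)$, which combined with the $g$-convexity yields
\[
\sum_{\ell=1}^n\tdelta^2\Sigma(\tE_\ell^\perp,\tE_\ell^\perp)\;\le\;\frac{2}{n-k}\sum_{\ell=1}^n\int_{\partial\Sigma}e^{-u}g(\alpha_{\partial\Omega}(E_\ell^\perp,E_\ell^\perp),\nu_g)\,d\ta\;\le\;0,
\]
with strictness under any of (i)--(iii). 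So the proof is not ``flat cancellation plus a curvature term'': it is a three-way interplay between the Hessian/curvature substitution, a divergence identity on $\Sigma$, and a two-sided use of the convexity of $\partial\Omega$ with respect to \emph{both} metrics. Your case (iii) strictness claim would survive this reorganization, but case (ii) is handled exactly by the displayed boundary inequality rather than by transforming $\mathrm{II}$ conformally; and without the interior absorption mechanism the proposal stops at an indefinite term.
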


\begin{remark*}
A priori, one could not assume that free boundary minimal submanifold have non-empty boundary. In this case, \cref{maintheorem} under the assumption $\mathrm{(i)}$ still holds true.
\end{remark*}

\begin{remark*}
    Note that under the assumptions of \cref{maintheorem}, the case of geodesics (with non-empty boundary) follows from the work of Frankel (cfr. \cite[E3 Frankel]{Schoen}). The stability inequality takes care of the case of oriented hypersurfaces (cfr. \cite[Lemma 2.1]{FraserLi}). We stated \cref{maintheorem} for submanifolds of dimension between $2$ and $n-2$ because our argument is only valid for these dimensions. We will emphasize in the proof of \cref{inequality} where this dimensional condition is essential. 
\end{remark*}

From \cref{maintheorem}, we deduce:
\begin{cor}\label{maincor}
   Let $(\Omega,\tg)$ be a $n$-dimensional Riemannian manifold with non-negative sectional curvatures that is conformal to a bounded domain $(\Omega,g)$ of $\Rbb^n$ under the conformal transformation $\tg=e^{2u}g$. If either:
   \begin{enumerate}
       \item[$\mathrm{(i)}$]$\partial \Omega$ is $p$-convex with respect to $g$ and $u$ is a strictly increasing function in the exterior direction of $\partial \Omega$;
        \item[$\mathrm{(ii)}$] $\partial \Omega$ is $p$-convex with respect to $\tg$ and $u$ is a strictly decreasing function in the exterior direction of $\partial \Omega$,
   \end{enumerate}
   then $(\Omega, \tg)$ does not contain $k$-dimensional compact stable free boundary minimal submanifolds for all $2\leq k \leq \min\{n-2,n-p\}.$
\end{cor}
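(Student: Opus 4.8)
The plan is to derive \cref{maincor} directly from \cref{maintheorem} by analysing how the $p$-convexity of $\partial\Omega$ behaves under the conformal change $\tg=e^{2u}g$. Denote by $\nu$ the outward unit normal of $\partial\Omega$ with respect to $g$; then $\tilde\nu=e^{-u}\nu$ is the outward unit normal with respect to $\tg$. First I would record the transformation law for the boundary's second fundamental form. Using the standard relation $\tnabla_X Y=\nabla_X Y+(Xu)Y+(Yu)X-g(X,Y)\nabla u$ between the Levi-Civita connections of $\tg$ and $g$, a short computation shows that for $X,Y$ tangent to $\partial\Omega$ the scalar second fundamental forms (taken with the convention that makes the round sphere positively curved) satisfy
\begin{equation*}
\tilde{A}(X,Y)=e^{u}\big(A(X,Y)+(\partial_\nu u)\,g(X,Y)\big),
\end{equation*}
where $\partial_\nu u=g(\nabla u,\nu)$. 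Diagonalising, the principal curvatures transform by $\tilde\lambda_i=e^{-u}\big(\lambda_i+\partial_\nu u\big)$.

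The key observation is that, since $e^{-u}>0$ and the shift $\partial_\nu u$ is common to every eigenvalue, the map $\lambda_i\mapsto\tilde\lambda_i$ is strictly increasing; hence it preserves the ordering of the principal curvatures, so that the $p$ smallest eigenvalues with respect to $g$ correspond to the $p$ smallest with respect to $\tg$. Summing these $p$ smallest eigenvalues gives
\begin{equation*}
\sum_{j=1}^{p}\tilde\lambda_{i_j}=e^{-u}\Big(\sum_{j=1}^{p}\lambda_{i_j}+p\,\partial_\nu u\Big).
\end{equation*}

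Now I would treat the two cases. Under hypothesis $\mathrm{(i)}$, $\partial\Omega$ is $p$-convex with respect to $g$, so $\sum_{j}\lambda_{i_j}\ge 0$, and $u$ strictly increasing in the exterior direction means $\partial_\nu u>0$; the displayed identity then yields $\sum_j\tilde\lambda_{i_j}>0$, i.e.\ $\partial\Omega$ is strictly $p$-convex with respect to $\tg$. Under hypothesis $\mathrm{(ii)}$, I would instead invert the relation as $\lambda_i=e^{u}\tilde\lambda_i-\partial_\nu u$; since $\partial\Omega$ is $p$-convex with respect to $\tg$ and $\partial_\nu u<0$, the analogous summation gives $\sum_j\lambda_{i_j}>0$, so $\partial\Omega$ is strictly $p$-convex with respect to $g$. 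In either case $\partial\Omega$ is $p$-convex with respect to both metrics and $(\Omega,\tg)$ retains non-negative sectional curvature, so all standing hypotheses of \cref{maintheorem} hold; case $\mathrm{(i)}$ verifies condition $\mathrm{(iii)}$ and case $\mathrm{(ii)}$ verifies condition $\mathrm{(ii)}$ of that theorem, and the conclusion follows at once.

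The only real care needed is bookkeeping: fixing the sign convention for $A$ consistently with the meaning of $p$-convexity, identifying the ``exterior direction'' with the outward normal $\nu$ so that monotonicity of $u$ translates into the correct sign of $\partial_\nu u$, and checking that the derived strict convexity also supplies the non-strict $p$-convexity that \cref{maintheorem} assumes for the complementary metric. I expect no analytic difficulty here; the entire content sits in the conformal transformation law for the boundary's principal curvatures.
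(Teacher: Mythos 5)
Your proposal is correct and is essentially the paper's own argument: both reduce \cref{maincor} to \cref{maintheorem} by observing that the conformal transformation of the second fundamental form of $\partial\Omega$, together with the sign of $\partial_\nu u$, upgrades $p$-convexity with respect to one metric to \emph{strict} $p$-convexity with respect to the other (your case $\mathrm{(i)}$ feeds hypothesis $\mathrm{(iii)}$ of the theorem, your case $\mathrm{(ii)}$ feeds hypothesis $\mathrm{(ii)}$). The only cosmetic difference is that the paper extracts this from its traced identity \eqref{ineq2} along $\partial\Sigma$, whereas you derive the full pointwise transformation law $\tilde\lambda_i=e^{-u}\big(\lambda_i+\partial_\nu u\big)$ for the principal curvatures, which is equivalent bookkeeping.
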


The results of the present paper can also be interpreted in light of the (elusive) analogy between free boundary minimal submanifolds of the Euclidean ball and closed minimal submanifolds of the round sphere (cfr. \cite{FraserSchoen2011,FraserSchoenSharp,LiFreeBoundary}). Indeed, \cref{maintheorem} can be viewed as the free boundary counterpart to the result of Giada Franz and the third author \cite{FranzTrinca}, along with the subsequent improvement by Hang Chen \cite{chen2023conformal}, which demonstrates that closed minimal submanifolds in positively curved spheres that are conformal to the round one are unstable. The idea used there is to trace the second variation over the family of conformally rescaled constant vector fields projected to the sphere. Our technique is directly inspired by this, indeed, the role of conformally rescaled constant vector fields projected to the sphere is played by conformally rescaled constant vector fields of the Euclidean space (cfr. \cite[Theorem 5.1.1]{Simons1968} and \cite[Theorem 2.2]{Schoen}).

It is interesting to notice that \cite{chen2023conformal,FranzTrinca,HuWei2003,ShenHe2001,ShenXu2000} fit in the context of the celebrated Lawson--Simons conjecture \cite{LawsonSimons}, which proposes that minimal submanifolds of closed, simply-connected, $\frac 1 4$-pinched Riemannian manifolds must be unstable. 
Observe that $\frac 1 4$-pinched is motivated by the well-known fact that $\mathbb C\mathbb P^n$ endowed with the Fubini--Study metric is a compact, simply-connected manifold with curvature between $[\frac 1 4,1]$ admitting stable minimal submanifolds, i.e., the complex submanifolds. 

Given the aforementioned analogy between closed minimal submanifolds in the round sphere and free boundary minimal submanifolds in the unit ball, it would be natural to propose a free boundary version of the Lawson--Simons conjecture with some version of \cref{maintheorem} as a corroborating evidence. Unfortunately, it is unclear to the authors what should be the correct curvature and boundary conditions in this context. Indeed, the standard calibration argument proving the stability of complex submanifolds in K\"ahler manifolds does not automatically go through when the submanifold has non-empty boundary. For instance, it is straightforward to observe from \cite[Theorem 6]{LawsonSimons} that free boundary minimal submanifolds of strictly convex domains of $\mathbb C\mathbb P^n$ are unstable. On the other hand, if we allow free boundary minimal submanifolds to have empty boundary and we allow weaker convexity assumptions on the boundary of the ambient manifold, then one can find closed complex submanifolds (hence, stable) in $p$-convex domains of $\mathbb C\mathbb P^n$ with $p\geq 3$ (cfr. \cite{Takagi1975} and the excellent reference \cite[p. 351-352]{CecilRyan2015}).

The paper is organized as follows. In \cref{preliminaries}, we fix our notations and we recall classical notions on conformal geometry and free boundary minimal submanifolds. In \cref{secondvariation}, we study the behavior of a certain quadratic form under conformal transformations. This quadratic form coincides with the second variation formula in the case of minimal submanifolds. In \cref{proof}, we prove \cref{maintheorem} and \cref{maincor}.
\subsection*{Acknowledgements}
The authors would like to thank Giada Franz and Celso Viana for many helpful comments and discussions. Federico Trinca would also like to express his gratitude to Alessandro Carlotto for introducing him to the work of Lawson--Simons and the study of free boundary minimal submanifolds, and to Jesse Madnick for pointing out the reference \cite{CecilRyan2015}. Alcides de Carvalho was partially supported by CNPq grant 150285/2023-0 as well as FAPEAL under the process E:60030.0000000513/2023. Roney Santos was partially supported by grant 2023/14796-3, S\~ao Paulo Research Foundation (FAPESP). Federico Trinca was partially supported by the Royal Society University Research Fellowship Renewal 2022 URF\textbackslash R\textbackslash 221030 and by the Pacific Institute for the Mathematical Sciences (PIMS).

\section{Preliminaries}\label{preliminaries}

Let $(M,g)$ be a compact $n$-dimensional Riemannian manifold of Levi-Civita connection $\nabla$. 

\subsection{Curvature} 
We assume (by convention) the Riemann curvature tensor $R_g$ of $(M,g)$ to be given by
\[R_g(X,Y)Z = \nabla_Y\nabla_XZ - \nabla_X\nabla_YZ + \nabla_{[X,Y]}Z,\quad \mbox{$X,Y,Z \in \mathfrak{X}(M).$}\]
For any $x\in M$ and two-plane $\Pi\subset T_x M$, we denote by $K_g(\Pi)$ the sectional curvature of $\Pi$, with respect to $g$. If $\Pi$ is generated by the vectors $X,Y\in T_x M$, then $K_g(\Pi)$ can also be indicated by $K_g(X,Y)$, and have the form
\[K_g(X,Y) = \frac{g(R_g(X,Y)X,Y)}{|X|^2_g|Y|^2_g - g(X,Y)},\]
where $|\cdot|_g$ is the norm induced by $g.$

\subsection{Submanifold theory}

Let $\Sigma$ be a $k$-dimensional submanifold of $M$, and let $T\Sigma$ and $N\Sigma$ be the tangent bundle and the normal bundle of $\Sigma$, respectively. The tangent and the normal bundles naturally admit connections compatible with the Levi-Civita connection $\nabla$ of $M$, which we denote by $\nabla^\top$ and $\nabla^\perp$, respectively. Such connections satisfy $\nabla^\top_X Y=(\nabla_X Y)^\top$ and $\nabla^\perp_X V=(\nabla_X V)^\perp$ for all smooth sections $X,Y \in \mathfrak{X}(\Sigma)$ and $V \in \Gamma(N\Sigma)$. 

The divergence operator $\divv_\Sigma$ is defined by $\divv_\Sigma(X) = \sum_{i=1}^k g(\nabla_{v_i}X,v_i)$, where $\{v_1, \ldots, v_k\}$ is a local orthonormal frame of $\Sigma.$

Let $u: M \to \Rbb$ be a smooth function. We denote the gradient of $u$ with respect to $g$ by $\nabla u$ and its Hessian by $\nabla^2u,$ which is defined, for every $X,Y \in \mathfrak{X}(M)$, as $\nabla^2u(X,Y) = g(\nabla_X\nabla u, Y)$. We indicate the tangent and normal parts of $\nabla u$ with respect to $\Sigma$ as $\nabla^\top u = (\nabla u)^\top$ and $\nabla^\perp u = (\nabla u)^\perp$, respectively.

Recall that the second fundamental form $\alpha$ of the submanifold $\Sigma$ is the symmetric tensor defined by
$$\alpha(X,Y) = (\nabla_X Y)^\perp,$$
for all $X,Y \in \mathfrak{X}(\Sigma)$. The trace of $\alpha$ gives the mean curvature $H$ of $\Sigma$. This means that 
\[H(x) = \sum_{i=1}^k\alpha(v_i,v_i),\]
for an arbitrary orthonormal basis $\{v_1, \ldots, v_k\}$ of $T_x\Sigma.$ If $\Sigma$ is compact, we can compute its $k$-volume by
\[vol_k(\Sigma) = \int_\Sigma d\mu,\]
where $d\mu$ is the volume element induced on $\Sigma$ by $g.$

We now discuss the geometric condition we want to impose on $\partial M$, which we see as a hypersurface in $M$ oriented by its inward-pointing normal $\eta$. For every integer $1\leq p\leq n-1$, we say that $\partial M$ is {\it $p$-convex} if the sum of the lowest $p$ principal curvatures is non-negative at each point of $\partial M$. Equivalently, this means that at each $x\in \partial M$ for every orthonormal vectors $\{v_1,\dots,v_p\}$ we have
\[
\sum_{i=1}^p g(\alpha_{\partial M}(v_i,v_i),\eta) \geq 0,
\]
where $\alpha_{\partial M}$ is the second fundamental form of $\partial M$. If the inequality is strict, we say that $\partial M$ is {\it strictly $p$-convex}. The special cases of $p=1$ or $p=n-1$ correspond to \emph{convexity} and \emph{mean-convexity}, respectively. 

\subsection{Conformal geometry}

We say that two Riemannian metrics $\tg$ and $g$ on $M$ are \textit{conformal} if there is a smooth function $u: M \to \Rbb$ such that $\tg = e^{2u}g.$ In this case, the function $e^{2u}$ is called \textit{conformal factor}. The following results collect some classical formulas relating objects taken with respect to $\tg$ and $g$ in terms of $u.$

\begin{proposition}[cf. {\cite[Theorem 1.159]{Besse}}]\label{conformallemma}
    Let $M$ be a $n$-dimensional manifold, and let $\tg$ and $g$ be conformal metrics on $M$ with conformal factor $e^{2u}.$ The following identities holds for vector fields $X,Y,Z \in \mathfrak{X}(M).$
    \begin{itemize}
        \item[$\mathrm{(i)}$]\label{item 1} The Levi-Civita connections $\tnabla$ and $\nabla$ with respect to $\tg$ and $g$ are related by
        \[\tnabla_XY = \nabla_XY + X(u)Y + Y(u)X - g(X,Y)\nabla u.\]
        
        \item[$\mathrm{(ii)}$]\label{item 2} The Riemann curvature tensors $R_{\tg}$ and $R_g$ respectively of $\tg$ and $g$ are related by
        \begin{align*}
            R_{\tg}(X,Y)Z &= R_g(X,Y)Z + X(u)Z(u)Y - Y(u)Z(u)X - X(u)g(Y,Z)\nabla u\\
            &\quad + Y(u)g(X,Z)\nabla u - g(X,Z)\nabla_Y\nabla u + g(Y,Z)\nabla_X\nabla u\\
            &\quad - g(X,Z)|\nabla u|^2_gY + g(Y,Z)|\nabla u|^2_gX - \nabla^2u(X,Z)Y + \nabla^2u(Y,Z)X.
        \end{align*}
        
        \item[$\mathrm{(iii)}$]\label{item 3} The sectional curvatures $K_{\tg}$ and $K_g$ with respect to $\tg$ and $g$ are related by
        \[e^{2u}K_{\tg}(X,Y) = K_g(X,Y) + X(u)^2 + Y(u)^2 - |\nabla u|^2_g - \nabla^2u(X,X) - \nabla^2u(Y,Y),\]
        for any $X$ and $Y$ orthonormal with respect to $g$. 
        \item[$\mathrm{(iv)}$]\label{item 4} The volume elements $d\tmu$ and $d\mu$ with respect to $\tg$ and $g$ are related by
        \[d\tmu = e^{nu}d\mu.\]
    \end{itemize}
\end{proposition}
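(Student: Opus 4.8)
The plan is to establish the four identities in order, as each feeds the next. The cornerstone is item (i). Rather than expanding the Koszul formula for $\tg$ and matching it term by term against that of $g$, I find it cleaner to define the candidate connection $D_XY = \nabla_XY + X(u)Y + Y(u)X - g(X,Y)\nabla u$ and to identify it with $\tnabla$ via uniqueness of the Levi-Civita connection. Torsion-freeness is immediate: the correction $X(u)Y + Y(u)X - g(X,Y)\nabla u$ is symmetric in $X$ and $Y$, so the torsion of $D$ equals that of $\nabla$, namely zero. For $\tg$-compatibility I would verify $X(\tg(Y,Z)) = \tg(D_XY,Z) + \tg(Y,D_XZ)$: on the left the conformal factor contributes $2X(u)e^{2u}g(Y,Z)$, and on the right the terms built from the identity $g(\nabla u, W) = W(u)$ cancel in pairs while the two copies of $X(u)g(Y,Z)$ combine to match the left-hand side.

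With (i) available, item (ii) is a direct but lengthy substitution into $R_{\tg}(X,Y)Z = \tnabla_Y\tnabla_XZ - \tnabla_X\tnabla_YZ + \tnabla_{[X,Y]}Z$, respecting the sign convention fixed in the preliminaries. The computation organizes itself by order of vanishing in $\nabla u$: the $u$-free terms reproduce $R_g(X,Y)Z$, the terms linear in the first derivatives of $u$ assemble into the Hessian contributions $\nabla^2u(Y,Z)X - \nabla^2u(X,Z)Y$, and the quadratic terms give the remaining algebraic expressions. The one genuine subtlety, and where I expect essentially all of the labor to concentrate, is that the inner $\tnabla_XZ$ and $\tnabla_YZ$ are themselves differentiated by the outer $\tnabla$, so each must be re-expanded via item (i); tracking these nested corrections and their cancellations is the crux of the argument.

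Items (iii) and (iv) are then short consequences. For (iii) I would set $Z = X$ in item (ii) and pair against $Y$ with $\tg = e^{2u}g$. For $g$-orthonormal $X,Y$ the denominator of $K_{\tg}(X,Y)$ equals $|X|^2_{\tg}|Y|^2_{\tg} - \tg(X,Y)^2 = e^{4u}$, while the numerator is $\tg(R_{\tg}(X,Y)X,Y) = e^{2u}g(R_{\tg}(X,Y)X,Y)$, so that $e^{2u}K_{\tg}(X,Y) = g(R_{\tg}(X,Y)X,Y)$. Using $g(X,Y) = 0$ and $|X|^2_g = |Y|^2_g = 1$ to annihilate every term carrying a factor $g(X,Y)$, the right-hand side collapses to exactly $K_g(X,Y) + X(u)^2 + Y(u)^2 - |\nabla u|^2_g - \nabla^2u(X,X) - \nabla^2u(Y,Y)$. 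For (iv) I would pass to local coordinates, where $d\mu = \sqrt{\det(g_{ij})}\,dx$; since $\tg_{ij} = e^{2u}g_{ij}$ scales the entire $n \times n$ Gram matrix, $\det(\tg_{ij}) = e^{2nu}\det(g_{ij})$, and taking square roots yields $d\tmu = e^{nu}d\mu$. All four formulas are the classical conformal-change identities, so the only real obstacle is executing the curvature substitution of item (ii) cleanly, without sign or transcription errors.
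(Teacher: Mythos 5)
Your proposal is sound, but there is no proof in the paper to measure it against: the paper states this proposition as a classical fact and simply cites Besse (Theorem 1.159 and Eq.\ 1.163), so any correct derivation is ``extra'' relative to the text. Your route is the standard one and the details you do supply check out. For (i), identifying the candidate connection with $\tnabla$ by uniqueness of the Levi-Civita connection is exactly right: the correction term is symmetric (so torsion-free), and in the compatibility identity the cross terms $Y(u)g(X,Z)-g(X,Z)Y(u)$ and $-g(X,Y)Z(u)+Z(u)g(X,Y)$ cancel in pairs while the two copies of $X(u)g(Y,Z)$ produce the factor $2X(u)e^{2u}g(Y,Z)$ coming from differentiating $e^{2u}$, as you say. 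For (iii), your normalization is correct: with $X,Y$ $g$-orthonormal the denominator is $e^{4u}$ and the numerator is $e^{2u}g(R_{\tg}(X,Y)X,Y)$, and substituting (ii) with $Z=X$ kills every term carrying $g(X,Y)$, leaving precisely the stated identity; one can confirm the signs are consistent with the paper's convention $R_g(X,Y)Z=\nabla_Y\nabla_XZ-\nabla_X\nabla_YZ+\nabla_{[X,Y]}Z$ by noting that the resulting formula gives $K_{\tg}\equiv -1$ for $u=-\ln x_n$ on the upper half-space, as it must. Item (iv) is the determinant scaling $\det(\tg_{ij})=e^{2nu}\det(g_{ij})$, which is complete as written. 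The only portion left as a sketch is the expansion in (ii); this is genuinely just bookkeeping, and your organization of it (re-expanding the nested $\tnabla_XZ$, $\tnabla_YZ$ via (i) and sorting terms by order in the derivatives of $u$) is the correct way to carry it out, with the caveat that the first-order correction terms, once differentiated, generate \emph{both} families of Hessian terms in the target formula, the vector-valued ones $g(Y,Z)\nabla_X\nabla u - g(X,Z)\nabla_Y\nabla u$ as well as the scalar ones $\nabla^2u(Y,Z)X-\nabla^2u(X,Z)Y$.
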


Using the formula for $\tnabla$ and $\nabla,$ it is straightforward to conclude the following.

\begin{proposition}[cf. {\cite[Eq. 1.163]{Besse}}]\label{conformallemma2}
    Let $M$ be a manifold and let $\Sigma$ be a $k$-dimensional submanifold of $M$. If $\tg$ and $g$ are conformal metrics on $M$ with conformal factor $e^{2u}$, then the second fundamental forms $\talpha$ and $\alpha$ of $\Sigma$ with respect to $\tg$ and $g$, respectively, are related by
    \[\talpha(X,Y) = \alpha(X,Y) - g(X,Y)\nabla^\perp u\]
    for all $X,Y \in \mathfrak{X}(\Sigma).$ In particular, the mean curvature vectors $\tH$ and $H$ of $\Sigma$ with respect to $\tg$ and $g$, respectively, are related by
        \[e^{2u}\tH = H - k\nabla^\perp u.\]
\end{proposition}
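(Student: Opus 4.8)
The plan is to derive both identities directly from the conformal transformation rule for the Levi-Civita connection in \cref{conformallemma}(i), exploiting the fact that the orthogonal splitting $TM|_\Sigma = T\Sigma \oplus N\Sigma$ is \emph{unchanged} under the rescaling $\tg = e^{2u}g$. Indeed, since $\tg$ differs from $g$ only by a positive factor, it preserves angles; hence a vector is $g$-normal to $\Sigma$ if and only if it is $\tg$-normal, and the $g$-orthogonal and $\tg$-orthogonal projections onto $N\Sigma$ coincide. We may therefore write both as $(\cdot)^\perp$ without ambiguity.

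First, I would compute $\talpha(X,Y) = (\tnabla_X Y)^\perp$ for $X,Y \in \mathfrak{X}(\Sigma)$. Substituting
\[
\tnabla_X Y = \nabla_X Y + X(u)Y + Y(u)X - g(X,Y)\nabla u
\]
and taking normal components, the two middle terms $X(u)Y$ and $Y(u)X$ vanish because $X$ and $Y$ are tangent to $\Sigma$, so $X^\perp = Y^\perp = 0$. What remains is
\[
\talpha(X,Y) = (\nabla_X Y)^\perp - g(X,Y)(\nabla u)^\perp = \alpha(X,Y) - g(X,Y)\nabla^\perp u,
\]
which is the claimed identity for the second fundamental forms.

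Next, for the mean curvature, the key point is to trace $\talpha$ with respect to $\tg$ rather than $g$. If $\{v_1,\dots,v_k\}$ is a local $g$-orthonormal frame of $T\Sigma$, then $\{e^{-u}v_1,\dots,e^{-u}v_k\}$ is $\tg$-orthonormal. Using bilinearity of $\talpha$ together with the first formula, I would compute
\[
\tH = \sum_{i=1}^k \talpha(e^{-u}v_i, e^{-u}v_i) = e^{-2u}\sum_{i=1}^k\bigl(\alpha(v_i,v_i) - \nabla^\perp u\bigr) = e^{-2u}\bigl(H - k\nabla^\perp u\bigr),
\]
and multiplying by $e^{2u}$ yields $e^{2u}\tH = H - k\nabla^\perp u$.

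There is no serious obstacle here: the only points requiring a moment's care are the conformal invariance of the normal projection (so that $(\tnabla_X Y)^\perp$ is computed with the same projection as $\alpha$) and the correct rescaling of the orthonormal frame when passing to the $\tg$-trace, which is precisely what produces the factor $e^{2u}$ in front of $\tH$.
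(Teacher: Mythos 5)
Your proof is correct and follows exactly the route the paper intends: the paper gives no explicit proof, stating only that the proposition follows ``using the formula for $\tnabla$ and $\nabla$'' from \cref{conformallemma}(i), which is precisely what you carry out (including the two genuine subtleties: conformal invariance of the normal projection and the $e^{-u}$ rescaling of the orthonormal frame in the $\tg$-trace).
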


\subsection{Free boundary minimal submanifolds}
We now assume that $\partial M \neq \emptyset$, and we consider $f: \Sigma \to M$ to be a compact $k$-dimensional submanifold of $M$ such that $f( \Sigma)\cap \partial M= f(\partial \Sigma)$ and $\partial\Sigma\neq\emptyset$. In this context, a {\it variation} of $\Sigma$ is a map $F: (-\varepsilon, \varepsilon) \times \Sigma
\rightarrow M$ satisfying:
\begin{itemize}
    \item[$\mathrm{(i)}$] $F$ is smooth;
    \item[$\mathrm{(ii)}$] $F(0,x) = f(x)$ for all $x \in \Sigma$;
    \item[$\mathrm{(iii)}$] $F(t,y)\in \partial M$ for all $y \in \partial\Sigma.$
\end{itemize}
Every variation $F$ induces a {\it variational vector field} $X$, which is the vector field defined, for all $x \in \Sigma$, by 
\[X(x) = \frac{d}{dt}f_t(x)\Big|_{t=0}, \]
where $f_t(x) = F(t,x)$. Obviously, $X$ is tangent to $\partial M$ along $f(\partial\Sigma)$. Conversely, from any vector field which is  tangent to $\partial M$ along $f(\partial\Sigma)$ one can construct a variation via the exponential map. In order to keep our notation light, we will identify $\Sigma$ and $\partial\Sigma$ with their images $f(\Sigma)$ and $f(\partial\Sigma)$ from now on.

In the usual way, for any variational vector field $X$ as above, we can define the first and second variation of the volume of $\Sigma$ as follows
\[\delta\Sigma(X) = \frac{d}{dt}vol_k(f_t(\Sigma))\Big|_{t=0}\quad \mbox{and}\quad \delta^2\Sigma(X,X) = \frac{d^2}{dt^2}vol_k(f_t(\Sigma))\Big|_{t=0}.\]

A \textit{free boundary minimal submanifold} is a critical point of the volume functional, i.e., $\delta\Sigma(X) = 0$ for every variational vector field $X$. In our setting, the classical first variation formula (cfr. \cite[Section 2]{Schoen}) reads: 
\[\delta\Sigma(X) = -\int_\Sigma g(X, H)d\mu + \int_{\partial\Sigma} g(X, \nu_g)da,\]
where $\nu_g$ is the outward unit conormal of $\Sigma$ (i.e., $\nu_g$ is the unique outer unit vector tangent to $\Sigma$ and normal to $\partial\Sigma$), $d\mu$ and $da$ are the volume elements induced by $g$ in $\Sigma$ and $\partial\Sigma,$ respectively. Hence, $\Sigma$ is a free boundary minimal submanifold if and only if $H\equiv0$ and $g(X,\nu_g) \equiv 0$ for every $X$ tangent to $\partial M$ along $\partial\Sigma$. The latter condition is commonly referred to as the {\it free boundary condition}, and geometrically means that $\Sigma$ meets $\partial M$ orthogonally along $\partial \Sigma$.

A free boundary minimal submanifold $\Sigma$ is \textit{stable} if $\delta^2\Sigma(X,X) \geq 0$ for all $X\in\Gamma(N\Sigma),$ and \textit{unstable} if there exists a $X\in\Gamma(N\Sigma)$ such that $\delta^2\Sigma(X,X) < 0.$ In this setting, the classical second variation formula proves that $\delta^2\Sigma(X,X)$ admits the following simpler form (cfr. \cite[Section 2]{Schoen}):
\[\delta^2\Sigma(X, X) = \int_\Sigma\bigg(|\nabla^\perp X|^2_g - \sum_{i=1}^kg(R_g(X,v_i)X,v_i) - g(\alpha,X)^2\bigg)d\mu + \int_{\partial\Sigma} g(\nabla_XX, \nu_g) da,\]
 where  $\{v_1, \ldots, v_k\}$ is an arbitrary orthonormal frame of $\Sigma,$ while
\[|\nabla^\perp X|^2_g = \sum_{i=1}^k|\nabla^\perp_{v_i}X|^2_g \quad \mbox{and}\quad g(\alpha,X)^2 = \sum_{i,j=1}^kg(\alpha(v_i,v_j),X)^2.\]

\section{The second variation of the free boundary volume functional after a conformal change of metric}\label{secondvariation}
Let $(M,g)$ be a compact $n$-dimensional Riemannian manifold with $\partial M\neq 0$, and let $\Sigma$ be a $k$-dimensional free boundary submanifold of $M$ such that $\Sigma\cap \partial M= \partial \Sigma$ (not necessarily minimal). We consider the quadratic operator defined for every $X\in \Gamma(N\Sigma)$ by 
\[Q_g(X,X) = \int_\Sigma S_g(X,X)d\mu + \int_{\partial\Sigma}T_g(X,X)da,\]
where, resepectively at each point of $\Sigma$ and $\partial\Sigma$, $S_g$ and $T_g$ are the quadratic operators given by
\[S_g(X,X) = |\nabla^\perp X|^2_g - \sum_{i=1}^kg(R_g(X,v_i)X,v_i) - g(\alpha,X)^2\quad \mbox{and}\quad T_g(X,X) = g(\nabla_XX,\nu_g).\]
As usual, $\{v_1,\dots,v_k\}$ represents a local orthonormal frame of $\Sigma$ and $\nu_g$ is the outward unit conormal of $\Sigma$. Observe that since $X$ is normal to $\Sigma$, the free boundary condition guarantees that it is tangent to $\partial M$ along $\partial\Sigma$. Also, if $\Sigma$ is a compact free boundary minimal submanifold, the operator $Q_g$ we have just defined coincides with the stability operator $\delta^2\Sigma$. 

Let $\tilde{g}$ be a metric conformal to $g$ of conformal factor $e^{2u}$. Similarly to \cite{FranzTrinca}, we now explain how $Q_{\tilde{g}}$ (in particular, $S_{\tilde{g}}$ and $T_{\tilde{g}}$) can be expressed in terms of $Q_g$ (in particular, $S_g$ and $T_g$). Note that $\Gamma(N\Sigma)$ and the free boundary condition are preserved under conformal change of metrics, hence, it makes sense to consider $Q_{\tilde{g}}$ and $Q_g$ for the given submanifold $\Sigma$. The formulas for the interior terms $S_g$ and $S_{\tilde g}$ can be recovered from \cite[Proposition 4.4 and Corollary 4.5]{FranzTrinca}. For the sake of completeness, we include the computations here as well.

\begin{lemma}\label{Q}
    Let $(M,g)$ be a compact $n$-dimensional Riemannian manifold, and let $\tg$ be a Riemannian metric conformal to $g$ with conformal factor $e^{2u}$. If $\Sigma$ is a $k$-dimensional free boundary minimal submanifold of $(M,\tg)$, then
    $$S_{\tg}(X,X) = S_g(X,X) + (\nabla^\top u)(|X|^2_g) + |X|^2_g\divv_\Sigma(\nabla u) + k|X|^2_g|\nabla u|^2_g + k\nabla^2u(X,X),$$
    $$T_{\tg}(X,X) =  e^u(T_g(X,X) - |X|^2_g\nu_g(u))$$
    for each vector field $X\in\Gamma(N\Sigma)$. In particular, if $\tX = e^{-u}X$, then 
    $$e^{2u}S_{\tg}(\tX,\tX) = S_g(X,X) - |X|^2_g|\nabla^\top u|^2_g + |X|^2_g\divv_\Sigma(\nabla u) + k|X|^2_g|\nabla u|^2_g + k\nabla^2u(X,X),$$
    $$e^uT_{\tg}(\tX,\tX) = T_g(X,X) - |X|^2_g\nu_g(u).$$
\end{lemma}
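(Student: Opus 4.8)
The plan is to expand each constituent of $S_{\tg}(X,X)$ and $T_{\tg}(X,X)$ separately using the conformal transformation rules of \cref{conformallemma} and \cref{conformallemma2}, and then reassemble. The preliminary observations are that a conformal change preserves $N\Sigma$ together with the orthogonal projections onto $T\Sigma$ and $N\Sigma$, so that from a $g$-orthonormal frame $\{v_1,\dots,v_k\}$ of $\Sigma$ one obtains the $\tg$-orthonormal frame $\{e^{-u}v_1,\dots,e^{-u}v_k\}$, and that the outward $\tg$-conormal is $\nu_{\tg}=e^{-u}\nu_g$. I will use repeatedly that $X$ is normal (hence $g(X,v_i)=g(X,\nu_g)=0$ and $X(u)=g(\nabla^\perp u,X)$) and, crucially, that the $\tg$-minimality of $\Sigma$ together with \cref{conformallemma2} forces $H=k\nabla^\perp u$.

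For the interior operator I would compute the three pieces in turn. The curvature sum and the second fundamental form term are essentially mechanical. By bilinearity, $\sum_i\tg(R_{\tg}(X,e^{-u}v_i)X,e^{-u}v_i)=\sum_i g(R_{\tg}(X,v_i)X,v_i)$, which I expand through \cref{conformallemma}(ii), discard every summand carrying $g(X,v_i)=0$, and then recognize $\sum_i v_i(u)^2=|\nabla^\top u|^2_g$ and $\sum_i\nabla^2 u(v_i,v_i)=\divv_\Sigma(\nabla u)$. For the second fundamental form, \cref{conformallemma2} yields $\tg(\talpha(e^{-u}v_i,e^{-u}v_j),X)=g(\alpha(v_i,v_j),X)-\delta_{ij}X(u)$; squaring and summing produces $g(\alpha,X)^2-2X(u)g(H,X)+kX(u)^2$, and this is exactly where minimality enters, as substituting $g(H,X)=kX(u)$ collapses it to $g(\alpha,X)^2-kX(u)^2$. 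The normal-connection term is the only genuinely new computation: from \cref{conformallemma}(i) and the normality of $X$ one gets $\tnabla^\perp_{v_i}X=\nabla^\perp_{v_i}X+v_i(u)X$, so after rescaling the frame and taking $\tg$-norms the surviving cross term assembles into $(\nabla^\top u)(|X|^2_g)$ via the identity $2g(\nabla^\perp_{v_i}X,X)=v_i(|X|^2_g)$. Adding the three contributions and cancelling the spurious $|X|^2_g|\nabla^\top u|^2_g$ and $kX(u)^2$ terms gives the claimed formula for $S_{\tg}$. The boundary term is faster: expanding $\tnabla_X X$ by \cref{conformallemma}(i), pairing with $\nu_{\tg}=e^{-u}\nu_g$, and using $g(X,\nu_g)=0$ immediately gives $T_{\tg}(X,X)=e^u(T_g(X,X)-|X|^2_g\nu_g(u))$.

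For the two ``in particular'' identities I would substitute $\tX=e^{-u}X$. The curvature and second fundamental form terms are pointwise quadratic in their argument, so they merely acquire the factor $e^{-2u}$ and reproduce the previous expansions after multiplying by $e^{2u}$. The decisive simplification is in the connection term: a direct computation shows $\tnabla^\perp_{e^{-u}v_i}\tX=e^{-2u}\nabla^\perp_{v_i}X$, since the $v_i(u)X$ contributions coming from differentiating $e^{-u}$ and from the connection formula cancel exactly; hence $e^{2u}|\tnabla^\perp\tX|^2_{\tg}=|\nabla^\perp X|^2_g$ with no gradient correction, and this is precisely what makes the rescaling $\tX=e^{-u}X$ natural. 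Collecting everything after multiplication by $e^{2u}$ (respectively $e^u$ on $\partial\Sigma$) yields the stated rescaled formulas; equivalently, one may feed $\tX$ into the general formulas already proved and track the extra $\nabla u$-terms produced by $|\tX|^2_g=e^{-2u}|X|^2_g$, which generate the additional $-|X|^2_g|\nabla^\top u|^2_g$ term.

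I expect the principal obstacle to be organizational rather than conceptual: keeping track of the numerous first- and second-order terms in $u$ and ensuring each quantity is measured in the correct metric and frame, so that the $e^{\pm u}$ factors are inserted consistently when passing between $g$ and $\tg$. The single conceptual point that must not be missed is the use of $\tg$-minimality via $H=k\nabla^\perp u$; without it the mean curvature cross-term in the second fundamental form computation would survive and the identity would fail to close.
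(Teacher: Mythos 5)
Your proposal is correct and follows essentially the same route as the paper: the same term-by-term expansion of $S_{\tg}$ (connection, curvature, second fundamental form) via \cref{conformallemma} and \cref{conformallemma2}, the same crucial use of $\tg$-minimality through $g(H,X)=kX(u)$ to collapse the cross-term, and the same one-line computation for $T_{\tg}$ using $\nu_{\tg}=e^{-u}\nu_g$ and tensoriality. Your direct verification that $\tnabla^\perp_{\tv_i}\tX=e^{-2u}\nabla^\perp_{v_i}X$ for the rescaled field is a clean equivalent of the paper's bookkeeping of the two non-tensorial terms $(\nabla^\top u)(|\tX|^2_g)$ and $|\nabla^\perp\tX|^2_g$, and both yield the stated formulas.
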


\begin{proof}
    Fix $\{\tv_1, \ldots, \tv_k\}$ a local orthonormal frame of $\Sigma$ with respect to $\tg$, which induces the local orthonormal frame of $\Sigma$ with respect to $g$: $\{v_1, \ldots, v_k\}$, where $v_i=e^u \tv_i$ for all $i = 1, \ldots, k$.

    We first deal with $S_{\tg}(X,X)$, which we recall to be
    \[S_{\tg}(X,X) = |\tnabla^\perp X|^2_{\tg} - \sum_{i=1}^k\tg(R_{\tg}(X,\tv_i)X,\tv_i) - \tg(\talpha,X)^2.\]
    Here, $\talpha$ denotes the second fundamental form of $\Sigma$ with respect to $\tg.$ Since (cfr. Item (i) in \cref{conformallemma})
    \begin{align*}
       \tnabla^\perp_{\tv_i}X = e^{-u}\big(\nabla_{v_i}X + v_i(u)X\big)^\perp = e^{-u}\big(\nabla^\perp_{v_i} X + v_i(u)X\big),
    \end{align*}
    we obtain the following formula for the first term of $S_{\tg}(X,X)$:
    \begin{align*}
        |\tnabla^\perp X|^2_{\tg} &= \sum_{i=1}^kg(\nabla^\perp_{v_i} X, \nabla^\perp_{v_i} X) + 2\sum_{i=1}^kv_i(u)g(\nabla^\perp_{v_i} X,X) + \sum_{i=1}^kv_i(u)^2g(X,X)\\
        &= |\nabla^\perp X|^2_g + (\nabla^\top u)(|X|^2_g) + |X|^2_g|\nabla^\top u|^2_g.
    \end{align*}
    The second term can be rewritten in the following form using Item (ii) in \cref{conformallemma}
    \begin{align*}
        \sum_{i=1}^k\tg(R_{\tg}(X,\tv_i)X,\tv_i) &= \sum_{i=1}^k \left(\tg(R_g(X,\tv_i)X,\tv_i)+ X(u)^2 + \tv_i(u)|X|^2_g\tg(\nabla u,\tv_i)\right)\\
        &\quad \quad \quad \quad - \sum_{i=1}^k\left(|X|^2_g\tg(\nabla_{\tv_i}\nabla u,\tv_i) + |X|^2_g|\nabla u|^2_g + \nabla^2u(X,X)\right)\\
        &= \sum_{i=1}^k g(R_g(X,v_i)X,v_i) + kX(u)^2 + |X|^2_g|\nabla^\top u|^2_g\\
        &\quad \quad \quad \quad - |X|^2_g\divv_\Sigma(\nabla u) - k|X|^2_g|\nabla u|^2_g - k\nabla^2u(X,X).
    \end{align*}
    For the last term, we first use \cref{conformallemma2} to compute
    \begin{align*}
        \tg(\talpha(\tv_i,\tv_j),X) = g(\alpha(v_i,v_j) - g(v_i,v_j)\nabla^\perp u,X) = g(\alpha(v_i,v_j),X) - g(v_i,v_j)X(u),
    \end{align*}
    which also implies
    \begin{align*}
        \tg(\talpha,X)^2 &= \sum_{i,j=1}^k(g(\alpha(v_i,v_j),X) - g(v_i,v_j)X(u))^2\\
        &= \sum_{i,j=1}^kg(\alpha(v_i,v_j),X)^2 - 2X(u)\sum_{i=1}^kg(\alpha(v_i,v_i),X) + X(u)^2\sum_{i=1}^kg(v_i,v_i)^2\\
        &= g(\alpha,X)^2 - 2X(u)g(X,H) + kX(u)^2,
    \end{align*}
    where $H$ is the mean curvature of $\Sigma$ with respect to $g.$ The minimality of $\Sigma$ with respect to $\tg$ implies (cfr. \cref{conformallemma2}) $g(X,H) = kX(u)$, from which we conclude that:
    \[\tg(\talpha,X)^2 = g(\alpha,X)^2 - kX(u)^2.\]
    Replacing such identities in $S_{\tg}$ we obtain the desired formula:
     \[S_{\tg}(X,X) = S_g(X,X) + (\nabla^\top u)(|X|^2_g) + |X|^2_g\divv_\Sigma(\nabla u) + k|X|^2_g|\nabla u|^2_g + k\nabla^2u(X,X).\]

    When $\tX = e^{-u}X,$ the only non-tensorial terms in the previous formula to adjust are $(\nabla^\top u)(|\tX|^2_g)$ and $|\nabla^\perp\tX|^2_g.$ The former transforms as:
    \[(\nabla^\top u)(|\tX|^2_g) = (\nabla^\top u)(e^{-2u}|X|^2_g) = e^{-2u}\left((\nabla^\top u)(|X|^2_g) - 2|X|^2_g|\nabla^\top u|^2_g\right),\]
    while the latter as:
    \begin{align*}
        |\nabla^\perp\tX|_g^2 &= \sum_{i=1}^k|\nabla^\perp_{v_i}(e^{-u}X)|_g^2\\
        &= \sum_{i=1}^k|e^{-u}\nabla^\perp_{v_i}X - e^{-u}v_i(u)X|^2_g\\
        &= e^{-2u}\sum_{i=1}^k\left(|\nabla^\perp_{v_i} X|_g^2 - 2v_i(u)g(\nabla^\perp_{v_i}X,X) + |X|^2_gv_i(u)^2\right)\\
        &= e^{-2u}\left(|\nabla^\perp X|^2_g - (\nabla^\top u)(|X|^2_g) + |X|^2_g|\nabla^\top u|^2_g\right).
    \end{align*}
    
    We now turn our attention to the boundary term $T_{\tg}(X,X) = \tg(\tnabla_XX,\nu_{\tg})$. Since $\nu_{\tg} = e^{-u}\nu_g,$ we can use Item (i) of \cref{conformallemma} to deduce:
    \begin{align*}
        T_{\tg}(X,X) &= \tg(\tnabla_XX, \nu_{\tg})\\
        &= e^ug(\nabla_XX + 2X(u)X - |X|^2_g\nabla u, \nu_g)\\
        &= e^u(T_g(X,X) - |X|^2_g\nu_g(u)).
    \end{align*}
    Since $T_{\tg}$ is tensorial, it is straightforward to compute $T_{\tg}(\tX,\tX)$.
    \end{proof}

\section{Proof of the main results}

In this section we prove \cref{maintheorem} and \cref{maincor}. Hence, from now on, $(\Omega,g)$ will denote a bounded domain of $\mathbb{R}^n$ endowed with the Euclidean metric and $\tg$ will be a metric conformal to $g$ of conformal factor $e^{2u}$.

As a first step, for \emph{any} $k$-dimensional free boundary submanifold $\Sigma$ of $\Omega$ (not necessarily minimal), we compute the trace of the quadratic operator $Q_g$, restricted to constant vector fields of $\mathbb{R}^{n}$ projected to the normal bundle of $\Sigma$. The free boundary condition guarantees that such vector fields are in $\Gamma (N\Sigma)$, i.e., are tangential to $\partial\Omega$ along $\partial \Sigma$ and so admissible variational vector fields for our problem.

\begin{lemma}\label{sum}
    Let $\Sigma$ be a $k$-dimensional free boundary submanifold of $(\Omega,g)$. Then at each point of $\Sigma$ and $\partial\Sigma,$ respectively, we have
    \[\sum_{\ell=1}^n S_g(E_\ell^\perp,E_\ell^\perp) = 0\quad \mbox{and}\quad \sum_{\ell=1}^n T_g(E_\ell^\perp,E_\ell^\perp) =  \sum_{\ell=1}^ng(\alpha_{\partial \Omega}(E_\ell^\perp,E_\ell^\perp), \nu_g),\]
   where $\{E_1, \ldots, E_n\}$ is an arbitrary orthonormal basis of $\Rbb^n$ and $\alpha_{\partial\Omega}$ is the second fundamental form of $\partial\Omega$ in $\Rbb^n.$
\end{lemma}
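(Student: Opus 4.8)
The plan is to compute the two traces separately, exploiting the fact that the ambient space is flat Euclidean $\mathbb{R}^n$, where constant vector fields have especially simple behavior. The key observation is that for a constant vector field $E_\ell$ on $\mathbb{R}^n$, the covariant derivative $\nabla_X E_\ell = 0$ for every $X$, so all the complexity comes from decomposing $E_\ell = E_\ell^\top + E_\ell^\perp$ into its tangential and normal parts relative to $\Sigma$ and differentiating these components. A fundamental identity I would establish first is that $\sum_{\ell=1}^n E_\ell \otimes E_\ell$ is the identity on each tangent space of $\mathbb{R}^n$ (independent of the orthonormal basis chosen), which will let me convert sums over $\ell$ into intrinsic geometric quantities like $\dim$, the mean curvature, and the second fundamental form of $\Sigma$.

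For the interior term, I would plug $X = E_\ell^\perp$ into the definition $S_g(X,X) = |\nabla^\perp X|^2_g - \sum_i g(R_g(X,v_i)X,v_i) - g(\alpha,X)^2$ and sum over $\ell$. Since $R_g \equiv 0$ in $\mathbb{R}^n$, the curvature term vanishes identically, which is the crux of why the interior trace is exactly $0$ rather than merely bounded. For the remaining two terms, I would use $\nabla_{v_i} E_\ell = 0$ to write $\nabla^\perp_{v_i}(E_\ell^\perp) = -(\nabla_{v_i} E_\ell^\top)^\perp$ and relate this to the second fundamental form via the Weingarten relation; summing over $\ell$ with the completeness identity should make $\sum_\ell |\nabla^\perp E_\ell^\perp|^2_g$ cancel exactly against $\sum_\ell g(\alpha, E_\ell^\perp)^2$. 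This is where the expected cancellation producing $0$ occurs.

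For the boundary term, I would start from $T_g(X,X) = g(\nabla_X X, \nu_g)$ with $X = E_\ell^\perp$. Writing $\nabla_{E_\ell^\perp}(E_\ell^\perp) = \nabla_{E_\ell^\perp}(E_\ell - E_\ell^\top) = -\nabla_{E_\ell^\perp}(E_\ell^\top)$ (again using that $E_\ell$ is constant), I expect the pairing with $\nu_g$ to produce a second fundamental form contribution. The subtlety here is that along $\partial\Sigma$ the free boundary condition forces $\nu_g$ to coincide with the normal direction in $\partial\Omega$, and the second fundamental form $\alpha_{\partial\Omega}$ of the hypersurface $\partial\Omega \subset \mathbb{R}^n$ enters precisely through terms of the form $g(\nabla_{E_\ell^\perp} \nu_g, \cdot)$ or their transpose. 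Summing over $\ell$ via the completeness identity should then collapse the expression into $\sum_\ell g(\alpha_{\partial\Omega}(E_\ell^\perp, E_\ell^\perp), \nu_g)$.

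The main obstacle will be bookkeeping the projections carefully: $E_\ell^\perp$ lives in $N\Sigma$, but the boundary computation must track how the conormal $\nu_g$ (tangent to $\Sigma$, normal to $\partial\Sigma$) interacts with $\alpha_{\partial\Omega}$, which is defined relative to the inclusion $\partial\Omega \hookrightarrow \mathbb{R}^n$ and uses the inward normal $\eta$ of $\partial\Omega$. The free boundary condition identifies $\nu_g$ with $\pm\eta$ along $\partial\Sigma$, and I would need to verify that the sign and the restriction of $\alpha_{\partial\Omega}$ to normal-bundle vectors $E_\ell^\perp$ come out correctly, taking care that the trace over $\ell$ does not accidentally pick up tangential contributions that should vanish. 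I would handle this by choosing, at a fixed boundary point, an adapted frame splitting $T_x\mathbb{R}^n$ into $T\partial\Sigma$, $\mathbb{R}\nu_g$, and $N\Sigma$, and expanding each $E_\ell$ in this frame before differentiating.
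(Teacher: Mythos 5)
Your proposal is correct and follows essentially the same route as the paper: the interior trace vanishes because $R_g\equiv 0$ and $\nabla^\perp_{v_i}E^\perp_\ell=-\alpha(v_i,E_\ell^\top)$ makes $\sum_\ell|\nabla^\perp E_\ell^\perp|^2_g$ cancel exactly against $\sum_\ell g(\alpha,E_\ell^\perp)^2$, while the boundary identity follows pointwise (for each $\ell$, no summation needed) from the free boundary condition identifying $\nu_g$ with the normal of $\partial\Omega$ together with the Weingarten relation. The only cosmetic difference is that you invoke the completeness identity $\sum_\ell E_\ell\otimes E_\ell=\mathrm{id}$ for a general basis, whereas the paper simply chooses $E_\ell(x)=v_\ell(x)$ at the point, which amounts to the same thing.
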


\begin{proof}
    Given $x\in \Sigma,$ let $\{v_1, \ldots, v_n\}$ be a local orthonormal frame such that $v_1, \ldots, v_k$ are tangent to $\Sigma$ and $v_{k+1}, \ldots, v_n$ are normal to $\Sigma.$ Since $\sum_{\ell=1}^nS_g(E_\ell^\perp,E_\ell^\perp)$ and $\sum_{\ell=1}^nT_g(E_\ell^\perp,E_\ell^\perp)$
    are independent of the orthonormal basis $\{E_1,\dots,E_n\}$, we can assume that $E_\ell(x) = v_\ell(x)$ for all $\ell=1,\dots,n$.
    
    For any constant vector field $E \in \Rbb^n$ and $v_i$ as above with $i=1,\ldots,k$, it holds
    \[\nabla^\perp_{v_i} E^\perp = (\nabla_{v_i} E - \nabla_{v_i} E^\top)^\perp = - \alpha(v_i, E^\top).\]
    From this, since the Riemann curvature tensor of $\Rbb^n$ is zero, we obtain that
    \[S_g(E^\perp,E^\perp) = |\nabla^\perp E^\perp|_g^2 - g(\alpha,E^\perp)^2 = \sum_{i=1}^k|\alpha(v_i,E^\top)|_g^2 - \sum_{i,j=1}^k g(\alpha(v_i,v_j),E^\perp)^2.\]
    Therefore, at $x$, the trace for $S_g$ becomes:
    \begin{align*}
        \sum_{\ell=1}^nS_g(E_\ell^\perp,E_\ell^\perp) &= \sum_{\ell=1}^n\sum_{i=1}^k|\alpha(v_i,v_\ell^\top)|_g^2 - \sum_{\ell=1}^n\sum_{i,j=1}^k g(\alpha(v_i,v_j),v_\ell^\perp)^2\\
        &= \sum_{i,j=1}^k|\alpha(v_i,v_j)|_g^2 - \sum_{i,j=1}^k|\alpha(v_i,v_j)|_g^2 = 0.
    \end{align*}
    
    We now consider $x \in \partial\Omega.$ By the free boundary condition, we must have that
    \[T_g(E^\perp,E^\perp) = g(\nabla_{E^\perp}E^\perp,\nu_g) = g(\alpha_{\partial \Omega}(E^\perp,E^\perp), \nu_g),\]
    for any contant vector field $E\in\Rbb^n.$ In particular,
    \[\sum_{\ell=1}^nT_g(E_\ell^\perp,E_\ell^\perp) = \sum_{\ell=1}^ng(\alpha_{\partial \Omega}(E_\ell^\perp,E_\ell^\perp), \nu_g),\]
    and the result follows.
\end{proof}

\begin{remark}
    By \cite{FraserSchoenMinimal, FraserSchoenSharp}, for any constant vector field $E\in \Rbb^n,$ the second variation of a compact $k$-dimensional free boundary minimal submanifold $\Sigma$ of the Euclidean unit ball $B^n$ in the direction of $E^\perp$ is
\begin{equation}\label{eqn: TrQg Ball}
    Q_g(E^\perp,E^\perp)=\delta^2\Sigma(E^\perp,E^\perp) = -k\int_\Sigma|E^\perp|^2_gd\mu.
\end{equation}

For a generic compact free boundary submanifold $\Sigma'\subset B^n$, this formula does not necessarily hold. However, \cref{sum} with $\Omega=B^n$ recovers \cref{eqn: TrQg Ball} traced over constant vector fields of $\mathbb R^{n}$. Indeed, for $\Sigma'$ generic, \cref{sum} with $\Omega=B^n$ implies 
\[\sum_{l=1}^n Q_g(E_l^\perp,E_l^\perp) = -(n-k)\cdot vol_{k-1}(\partial\Sigma'),\]
while, for $\Sigma$ minimal, \eqref{eqn: TrQg Ball} gives
\[\sum_{l=1}^n Q_g(E_l^\perp,E_l^\perp) = -k(n-k)\cdot vol_{k}(\Sigma)=-(n-k)\cdot vol_{k-1}(\partial\Sigma),\]
where the last equality follows from \cite[Proposition 2.4]{LiFreeBoundary}. This observation is analogous to the one described in \cite[Theorem 5.1]{FranzTrinca} for the round sphere.
\end{remark}

Given \cref{Q,sum}, it is natural to consider rescaled constant vector fields of $\mathbb R^n$ as competitors for the stability of minimal submanifolds in conformal domains.

\begin{lemma}\label{ST}
    Let $(\Omega,g)$ be a bounded domain of $\Rbb^n,$ and let $\tg$ be a Riemannian metric conformal to $g$ of conformal factor $e^{2u}.$ Let $\Sigma$ be a $k$-dimensional free boundary minimal submanifold of $(\Omega,\tg)$. Then, respectively at each point of $\Sigma$ and $\partial\Sigma$ we have
    \begin{align*}
        e^{2u}\sum_{\ell=1}^n S_{\tg}(\tE_\ell^\perp,\tE_\ell^\perp)&= k|\nabla^\perp u|^2_g - e^{2u} K_{\tg}(T\Sigma,N\Sigma)\\
        e^u\sum_{\ell=1}^n T_{\tg}(\tE_\ell^\perp,\tE_\ell^\perp) &= -(n-k)\nu_g(u) + \sum_{\ell=1}^ng(\alpha_{\partial \Omega}(E_\ell^\perp,E_\ell^\perp), \nu_g)
    \end{align*}
    where $\{E_1, \ldots, E_n\}$ is an arbitrary orthonormal basis of $\Rbb^n,$ $\tE_\ell = e^{-u}E_\ell$ for $\ell = 1,\ldots,n,$ and $\alpha_{\partial\Omega}$ is the second fundamental form of $\partial\Omega$ in $\Rbb^n.$ Here, $K_{\tg}(T\Sigma,N\Sigma)$ denotes
$$K_{\tg}(T\Sigma,N\Sigma)=\sum_{i=1}^k\sum_{r=k+1}^n K_{\tg}(\tv_i,\tv_r),$$
where $\{\tv_1,\dots,\tv_k,\tv_{k+1},\dots,\tv_{n}\}$ is any orthonormal frame of $(\Omega,\tg)$ such that $\tv_1,\ldots,\tv_k$ are tangent to $\Sigma$ and $\tv_{k+1},\dots,\tv_{n}$ are normal to $\Sigma$.
\end{lemma}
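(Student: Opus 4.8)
The plan is to reduce both identities to the per--vector--field formulas of \cref{Q}, summed over $\ell$, and to evaluate the resulting traces by exploiting that they are independent of the chosen orthonormal basis $\{E_1,\dots,E_n\}$. Since $E\mapsto\tE^\perp=e^{-u}E^\perp$ is linear in $E$ and both $S_{\tg}(\cdot,\cdot)$ and $T_{\tg}(\cdot,\cdot)$ are quadratic, each sum $\sum_\ell S_{\tg}(\tE_\ell^\perp,\tE_\ell^\perp)$, $\sum_\ell T_{\tg}(\tE_\ell^\perp,\tE_\ell^\perp)$ (and all the auxiliary sums below) is a trace, hence basis-independent. As in the proof of \cref{sum}, I would therefore fix a point and align $\{E_\ell\}$ with an adapted $g$-orthonormal frame $\{v_1,\dots,v_n\}$, with $v_1,\dots,v_k$ tangent and $v_{k+1},\dots,v_n$ normal to $\Sigma$. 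With this choice $E_\ell^\perp=0$ for $\ell\le k$ and $E_\ell^\perp=v_\ell$ for $\ell>k$, which immediately gives the two elementary traces I will need,
\[
\sum_{\ell=1}^n |E_\ell^\perp|^2_g = n-k, \qquad \sum_{\ell=1}^n \nabla^2u(E_\ell^\perp,E_\ell^\perp) = \sum_{r=k+1}^n \nabla^2u(v_r,v_r),
\]
the latter being the trace of $\nabla^2 u$ over $N\Sigma$; I will also use $\divv_\Sigma(\nabla u)=\sum_{i=1}^k\nabla^2u(v_i,v_i)$.

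For the interior identity, I would apply the $\tX=e^{-u}X$ version of \cref{Q} with $X=E_\ell^\perp$ (note $\tE_\ell^\perp=e^{-u}E_\ell^\perp$, since the tangent--normal splitting is conformally invariant), sum over $\ell$, and insert $\sum_\ell S_g(E_\ell^\perp,E_\ell^\perp)=0$ from \cref{sum} together with the two traces above. This produces $e^{2u}\sum_\ell S_{\tg}(\tE_\ell^\perp,\tE_\ell^\perp)$ as an explicit combination of $|\nabla^\top u|^2_g$, $\divv_\Sigma(\nabla u)$, $|\nabla u|^2_g$ and $\sum_{r>k}\nabla^2u(v_r,v_r)$. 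Separately, I would expand $e^{2u}K_{\tg}(T\Sigma,N\Sigma)$: since $K_{\tg}$ depends only on the spanned plane, $K_{\tg}(\tv_i,\tv_r)=K_{\tg}(v_i,v_r)$, and item (iii) of \cref{conformallemma} with $K_g\equiv 0$ (as $\Omega\subset\Rbb^n$ is flat) gives $e^{2u}K_{\tg}(v_i,v_r)=v_i(u)^2+v_r(u)^2-|\nabla u|^2_g-\nabla^2u(v_i,v_i)-\nabla^2u(v_r,v_r)$. Summing over the $k(n-k)$ mixed pairs $1\le i\le k<r\le n$ yields another explicit combination of exactly the same four quantities, plus the term $k|\nabla^\perp u|^2_g$ coming from $\sum_i\sum_r v_r(u)^2$.

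The crux is then purely bookkeeping: adding the two expressions, the contributions of $|\nabla^\top u|^2_g$, $\divv_\Sigma(\nabla u)$, $|\nabla u|^2_g$ and $\sum_{r>k}\nabla^2u(v_r,v_r)$ cancel in pairs, leaving precisely $e^{2u}\sum_\ell S_{\tg}(\tE_\ell^\perp,\tE_\ell^\perp)+e^{2u}K_{\tg}(T\Sigma,N\Sigma)=k|\nabla^\perp u|^2_g$, which is the claimed interior formula. I expect this cancellation to be the only delicate point: not conceptually hard, but requiring careful tracking of the combinatorial factors $n-k$ and $k$ produced by the two different ranges of summation (over $\ell$ versus over mixed pairs $(i,r)$).

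For the boundary identity the argument is shorter: applying the $T_{\tg}$ part of \cref{Q} with $X=E_\ell^\perp$ and summing gives $e^u\sum_\ell T_{\tg}(\tE_\ell^\perp,\tE_\ell^\perp)=\sum_\ell T_g(E_\ell^\perp,E_\ell^\perp)-\big(\sum_\ell|E_\ell^\perp|^2_g\big)\nu_g(u)$. Substituting $\sum_\ell T_g(E_\ell^\perp,E_\ell^\perp)=\sum_\ell g(\alpha_{\partial\Omega}(E_\ell^\perp,E_\ell^\perp),\nu_g)$ from \cref{sum} and $\sum_\ell|E_\ell^\perp|^2_g=n-k$ immediately produces $-(n-k)\nu_g(u)+\sum_\ell g(\alpha_{\partial\Omega}(E_\ell^\perp,E_\ell^\perp),\nu_g)$, as required.
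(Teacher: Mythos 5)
Your proposal is correct and follows essentially the same route as the paper's own proof: both combine \cref{Q} (the $\tX=e^{-u}X$ version) with the flat traces from \cref{sum}, expand $e^{2u}K_{\tg}(T\Sigma,N\Sigma)$ via item (iii) of \cref{conformallemma} with $K_g\equiv 0$, and observe that all terms cancel except $k|\nabla^\perp u|^2_g$; the boundary identity is handled identically. The cancellation you flag as the delicate point does indeed work out exactly as you describe.
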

\begin{proof}
    Given $x\in\Sigma$, let $\{v_1, \ldots, v_n\}$ be a local orthonormal frame with respect to $g$ such that $v_1, \ldots, v_k$ are tangent to $\Sigma$ and $v_{k+1},\ldots,v_n$ are normal to $\Sigma$. This local frame induces an orthonormal frame with respect to $\tg$ with the same properties via $\tv_\ell=e^{-u}v_\ell$. Since $\sum_{\ell=1}^n S_{\tg}(\tE_\ell^\perp,\tE_\ell^\perp)$ and $\sum_{\ell=1}^n T_{\tg}(\tE_\ell^\perp,\tE_\ell^\perp)$ are independent of the orthonormal basis $\{E_1, \ldots, E_n\}$, we can assume that $\tE_\ell(x)=\tv_\ell(x)=e^{-u}v_\ell(x)$ for all $\ell=1,\ldots,n$.
    
    Combining \cref{Q,sum}, it is straightforward to verify that
    \[e^{2u}\sum_{\ell=1}^n S_{\tg}(\tE_\ell^\perp,\tE_\ell^\perp) = -(n-k)|\nabla^\top u|^2_g + (n-k)\divv_\Sigma(\nabla u) + k(n-k)|\nabla u|^2_g + k\sum_{r=k+1}^n\nabla^2u(v_r,v_r).\]
    Furthermore, Item (iii) of \cref{conformallemma} implies that for all $\ell,m = 1, \ldots, n$ with $\ell \neq m$, we have
    \[e^{2u} K_{\tg}(E_\ell, E_m) = {E_\ell(u)}^2+ {E_m(u)}^2- |\nabla u|^2_g  - \nabla^2 u (E_\ell, E_\ell) - \nabla^2 u (E_m, E_m),\]
    from which we deduce
    \begin{equation}\label{eqn: TraceTN}
    \begin{aligned}
        &e^{2u}\sum_{i=1}^k\sum_{r=k+1}^nK_{\tg}(\tE_i,\tE_r) = e^{2u}\sum_{i=1}^k\sum_{r=k+1}^nK_{\tg}(E_i,E_r)\\
        &= (n-k)|\nabla^\top u|^2_g + k|\nabla^\perp u|^2_g - k(n-k)|\nabla u|^2_g - (n-k)\divv_\Sigma(\nabla u) - k \sum_{r=k+1}^n\nabla^2u(E_r,E_r).
    \end{aligned}
    \end{equation}
    In particular, at $x,$ we obtain the desired formula
    \[\sum_{\ell=1}^nS_{\tg}(\tE_\ell^\perp, \tE_\ell^\perp) = - \sum_{i=1}^k\sum_{r=k+1}^n K_{\tg}(\tv_i, \tv_r) + ke^{-2u}|\nabla^\perp u|^2_g.\]
    
    Using again \cref{Q,sum}, but now at $x\in\partial\Sigma$, we see that:
    \[e^u \sum_{\ell=1}^nT_{\tg}(\tE_\ell^\perp,\tE_\ell^\perp) = \sum_{\ell=1}^n T_g(E_\ell^\perp,E_\ell^\perp) - (n-k)\nu_g(u) = \sum_{\ell=1}^ng(\alpha_{\partial \Omega}(E_\ell^\perp,E_\ell^\perp), \nu_g) - (n-k)\nu_g(u)\]
    from which we conclude.
\end{proof}

We now show how to control $\sum_{\ell=1}^{n} \int_{\Sigma} S_{\tilde{g}}(\tE_\ell^{\perp}, \tE_\ell^{\perp}) d\tilde{\mu}$ in terms of boundary elements.

\begin{lemma}\label{inequality}
Let $(\Omega,g)$ be a bounded domain of $\Rbb^n$, and let $\tg$ be a Riemannian metric conformal to $g$ of conformal factor $e^{2u}$ with non-negative sectional curvatures. Let $\Sigma$ be a compact $k$-dimensional free boundary minimal submanifold of $(\Omega,\tg)$ with $2 \leq k \leq n-2.$ Then
\begin{equation*}
\sum_{\ell=1}^{n} \int_{\Sigma} S_{\tilde{g}}(\tE_\ell^{\perp}, \tE_\ell^{\perp}) d\tilde{\mu} \leq 2 \int_{\partial \Sigma} \nu_{\tg}(u) d\tilde{a},
\end{equation*}
where $\{E_1, \ldots, E_n\}$ is an arbitrary orthonormal basis of $\Rbb^n,$ and $\tE_\ell = e^{-u}E_\ell$ for every $\ell=1,\ldots,n$. Moreover, if the conformal metric $g$ has positive sectional curvature, then the inequality is strict.
\end{lemma}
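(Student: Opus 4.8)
The plan is to start from the pointwise identity of \cref{ST}, integrate it against $d\tmu = e^{ku}d\mu$, and reduce the claim to a single divergence identity on $\Sigma$ whose sign is controlled by the dimension restriction together with the non-negativity of the sectional curvature. Since $d\tmu=e^{ku}d\mu$, \cref{ST} gives
\[
\sum_{\ell=1}^n\int_\Sigma S_{\tg}(\tE_\ell^\perp,\tE_\ell^\perp)\,d\tmu
= k\int_\Sigma e^{(k-2)u}|\nabla^\perp u|_g^2\,d\mu
-\int_\Sigma e^{(k-2)u}\,e^{2u}K_{\tg}(T\Sigma,N\Sigma)\,d\mu,
\]
so the task is to turn the first integral into the boundary term $2\int_{\partial\Sigma}\nu_{\tg}(u)\,d\tilde a=2\int_{\partial\Sigma}e^{(k-2)u}\nu_g(u)\,da$ while absorbing the curvature integral.

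I would first handle the interior gradient integral by the divergence theorem on $\Sigma$. As $\Sigma$ is minimal with respect to $\tg$, \cref{conformallemma2} yields $H=k\nabla^\perp u$ for the Euclidean mean curvature. Applying the divergence theorem to the ambient field $W=e^{(k-2)u}\nabla u$ along $\Sigma$ (for which $g(W,\nu_g)=e^{(k-2)u}\nu_g(u)$ and $g(W,H)=k\,e^{(k-2)u}|\nabla^\perp u|_g^2$) gives
\[
\int_{\partial\Sigma}e^{(k-2)u}\nu_g(u)\,da
=\int_\Sigma e^{(k-2)u}\Big(\divv_\Sigma(\nabla u)+k|\nabla^\perp u|_g^2+(k-2)|\nabla^\top u|_g^2\Big)\,d\mu.
\]
The heart of the matter is then to control the tangential Hessian trace $\divv_\Sigma(\nabla u)=\sum_{i=1}^k\nabla^2u(v_i,v_i)$, whose sign is a priori uncontrolled; this is the main obstacle, and it is exactly where both the conformal flatness of $\tg$ and the condition $2\le k\le n-2$ enter.

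The key observation is that Item (iii) of \cref{conformallemma} can be written, for \emph{any} orthonormal frame $\{E_\ell\}$ of $\Rbb^n$, as
\[
e^{2u}K_{\tg}(E_\ell,E_m)=a_\ell+a_m,\qquad a_\ell:=E_\ell(u)^2-\tfrac12|\nabla u|_g^2-\nabla^2u(E_\ell,E_\ell),
\]
reflecting that a conformally flat metric has sectional curvatures equal to pairwise sums of the diagonal entries of a single symmetric tensor. Writing $A_T:=\sum_{i=1}^k a_i$ and $A_N:=\sum_{r=k+1}^n a_r$ in a frame adapted to $T\Sigma\oplus N\Sigma$, one computes $\divv_\Sigma(\nabla u)=|\nabla^\top u|_g^2-\tfrac k2|\nabla u|_g^2-A_T$ and $e^{2u}K_{\tg}(T\Sigma,N\Sigma)=(n-k)A_T+kA_N$. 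Crucially, $K_{\tg}\ge0$ means $a_\ell+a_m\ge0$ for $\ell\neq m$, so averaging the pairwise inequalities forces $A_T\ge0$ once $k\ge2$ and $A_N\ge0$ once $n-k\ge2$; with only one tangent (resp.\ normal) direction no such pairwise constraint exists, which is precisely why $2\le k\le n-2$ is essential.

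Substituting the formula for $\divv_\Sigma(\nabla u)$ into the divergence identity, I would get
\[
k\int_\Sigma e^{(k-2)u}|\nabla^\perp u|_g^2\,d\mu
=2\int_{\partial\Sigma}e^{(k-2)u}\nu_g(u)\,da
-(k-2)\int_\Sigma e^{(k-2)u}|\nabla^\top u|_g^2\,d\mu
+2\int_\Sigma e^{(k-2)u}A_T\,d\mu,
\]
and feeding this, together with $e^{2u}K_{\tg}(T\Sigma,N\Sigma)=(n-k)A_T+kA_N$, into the first displayed equation, all gradient terms collapse to
\[
\sum_{\ell=1}^n\int_\Sigma S_{\tg}(\tE_\ell^\perp,\tE_\ell^\perp)\,d\tmu
=2\int_{\partial\Sigma}\nu_{\tg}(u)\,d\tilde a
-(k-2)\int_\Sigma e^{(k-2)u}|\nabla^\top u|_g^2\,d\mu
-(n-k-2)\int_\Sigma e^{(k-2)u}A_T\,d\mu
-k\int_\Sigma e^{(k-2)u}A_N\,d\mu.
\]
Under $2\le k\le n-2$ the three correction integrals are non-negative: the first needs $k\ge2$; the second needs $k\le n-2$ (so $n-k-2\ge0$) and $k\ge2$ (so $A_T\ge0$); the third needs $n-k\ge2$ (so $A_N\ge0$). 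This yields the stated inequality. For strictness, if the sectional curvature of $\tg$ is strictly positive then $a_\ell+a_m>0$, hence $A_N>0$ pointwise (again using $n-k\ge2$), so the last integral is strictly positive and the inequality is strict.
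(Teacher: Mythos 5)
Your proof is correct, and its skeleton is the one the paper uses: trace \cref{ST}, feed in the conformal curvature formula from \cref{conformallemma}, use minimality (via $H=k\nabla^\perp u$) together with the divergence theorem on $\Sigma$ to produce the boundary term, and then kill the leftover terms using $2\le k\le n-2$ and $K_{\tg}\ge 0$. Where you genuinely diverge is in the bookkeeping of the curvature terms. The paper keeps explicit sectional-curvature sums: it sums the conformal identity over the normal block, solves that relation for $\sum_r \nabla^2u(E_r,E_r)$ (this is where $k\le n-2$ enters there, as one must divide by $n-k-1$), substitutes back, and arrives at a \emph{pointwise} expression for $\sum_\ell S_{\tg}(\tE_\ell^\perp,\tE_\ell^\perp)$ as a divergence-form term plus manifestly non-positive multiples of the curvature sums, which it then integrates. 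You instead exploit the pairwise-sum structure $e^{2u}K_{\tg}(E_\ell,E_m)=a_\ell+a_m$ of a conformally flat metric and deduce the block non-negativity $A_T\ge 0$, $A_N\ge 0$ by averaging the pairwise inequalities --- a small auxiliary lemma the paper never needs, but one that makes the role of \emph{both} dimension hypotheses completely transparent (each block must contain at least two directions). In fact the two computations yield literally the same integral identity: the paper's curvature combination $\frac{1}{n-k}\bigl[(n-k-2)\sum_{i,r}K_{\tg}(v_i,v_r)+\frac{k}{n-k-1}\sum_{r\neq s}K_{\tg}(v_r,v_s)\bigr]$ equals your $e^{-2u}\bigl[(n-k-2)A_T+kA_N\bigr]$, and its divergence-form term, once integrated, is exactly your boundary term minus $(k-2)\int_\Sigma e^{(k-2)u}|\nabla^\top u|^2_g\,d\mu$. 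So your route buys an exact identity with each error term's sign certified separately, and a clearer explanation of why $2\le k\le n-2$ is needed, at the cost of the extra averaging step; the paper's version is marginally shorter but leaves the dimensional restriction buried in a division by $n-k-1$. Your treatment of the strictness claim (reading the statement's ``$g$'' as $\tg$, which is clearly the intended meaning since $g$ is flat) also matches the paper's.
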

\begin{proof}
Given $x\in\Sigma$, let $\{v_1, \ldots, v_n\}$ be a local orthonormal frame with respect to $g$ such that $v_1, \ldots, v_k$ are tangent to $\Sigma$ and $v_{k+1},\ldots,v_n$ are normal to $\Sigma$. Since $\sum_{\ell=1}^n S_{\tg}(\tE_\ell^\perp,\tE_\ell^\perp)$ is independent of the orthonormal basis $\{E_1, \ldots, E_n\}$, we can assume that $E_\ell(x)=v_\ell(x)$ for all $\ell=1,\ldots,n$.

As a first step, we observe that \eqref{eqn: TraceTN} can be rewritten in the following form
\begin{equation*}
    \begin{aligned}
        e^{2u} \sum_{i=1}^{k} \sum_{r=k+1}^{n}K_{\tilde{g}}(E_{i}, E_r) = -(k-1)(n-k)&|\nabla^{\top} u|_g^{2} - k(n-k-1)|\nabla^{\perp} u|_g^{2}\\
        & - (n-k) \divv_{\Sigma}(\nabla u)- k \sum_{r=k+1}^{n} \nabla^2{u}(E_r, E_r).
    \end{aligned}
\end{equation*}
Moreover, by minimality of $\Sigma$ with respect to $\tg,$ one can easily verify that $\divv_{\Sigma}(\nabla u) = -g(\nabla u,H)+\divv_{\Sigma}(\nabla^{\top} u)= -k|\nabla^{\perp}u|^2_g+ \divv_{\Sigma}(\nabla^{\top} u),$ and thus
\begin{equation}\label{tangent/tangent}
\begin{aligned}
e^{2u} \sum_{i=1}^{k} \sum_{r=k+1}^{n}K_{\tilde{g}}(E_{i}, E_r) = -(k-1)(n-k)&|\nabla^{\top} u|_g^{2} +k|\nabla^{\perp} u|_g^{2}\\
& - (n-k) \divv_{\Sigma}(\nabla^{\top} u)- k \sum_{r=k+1}^{n} \nabla^2{u}(E_r, E_r).
\end{aligned}
\end{equation}
Similarly, as $k\leq n-2$, we can sum over $r,s=k+1, \ldots, n,$, and obtain
\[e^{2u} \sum_{r\neq s} K_{\tilde{g}}(E_r, E_s) = -(n-k-1)\bigg((n-k)|\nabla^{\top} u|_g^{2} + (n-k-2)|\nabla^{\perp} u|_g^{2} + 2\sum_{r=k+1}^{n}\nabla^2u(E_r, E_r)\bigg),\]
which, in particular, implies
\begin{equation}\label{eqn: NN}
\sum_{r=k+1}^n \nabla^2{u}(E_r, E_r) = -\frac{1}{2}\bigg((n-k)|\nabla^{\top}u|^{2}_g  + (n-k-2)|\nabla^{\perp} u|^{2}_g + \frac{e^{2u}}{n-k-1} \sum_{r\neq s} K_{\tilde{g}}(E_r, E_s)\bigg).
\end{equation}
Inserting \eqref{eqn: NN} into \eqref{tangent/tangent}, we can directly compute 
\begin{align*}
e^{2u}\sum_{i=1}^{k} \sum_{r=k+1}^{n}K_{\tilde{g}}(E_{i}, E_r) & =\frac{1}{2}\left(-(k-2)(n-k)|\nabla^{\top} u|_g^{2} + k(n-k)|\nabla^{\perp} u|_g^{2}\right)\\
&\quad \quad \quad \quad - (n-k) \divv_{\Sigma}(\nabla^{\top} u) + \frac{k e^{2u}}{2(n-k-1)} \sum_{r \neq s} K_{\tilde{g}}(E_r, E_s).
\end{align*}
Isolating $k|\nabla^{\perp} u|^{2}$ in the above equality, we can use its expression in \cref{ST} to obtain
\begin{align*}
    \sum_{\ell=1}^nS_{\tg}(\tE_\ell^\perp,\tE_\ell^\perp) &= e^{-2u}\left((k-2)|\nabla^\top u|^2_g + 2\divv_\Sigma(\nabla^\top u)\right)\\
    &\quad -\frac{1}{n-k}\bigg((n-k-2)\sum_{i=1}^k\sum_{r=k+1}^nK_{\tg}(v_i,v_r) + \frac{k}{n-k-1}\sum_{r\neq s}K_{\tg}(v_r,v_s)\bigg).
\end{align*}
Observe that the second term does not depend on the choice of the orthonormal frame and that the first term is in divergence form, indeed, $$\divv_\Sigma\big(e^{(k-2)u}\nabla^\top u\big) = (k-2)e^{(k-2)u}|\nabla^\top u|^2_g + e^{(k-2)u}\divv_\Sigma(\nabla^\top u).$$  
Finally, since the sectional curvatures of $\tg$ are non-negative and $d\tmu = e^{ku}d\mu,$ the divergence theorem implies
\begin{align*}
    \sum_{i=1}^{n} \int_{\Sigma} S_{\tilde{g}}(\tE_i^{\perp}, \tE_i^{\perp}) d\tilde{\mu} \leq  2\int_{\Sigma}\operatorname{div}_{\Sigma}\big(e^{(k-2)u}\nabla^{\top} u\big) d\mu = 2\int_{\partial\Sigma}e^{(k-2)u}\nu_g(u) da = 2\int_{\partial\Sigma}\nu_{\tg}(u) d\tilde{a},
\end{align*}
where, in the first inequality, we use that $2\leq k\leq n-2$, that $K_{\tg} \geq 0$ and that $(k-2)|\nabla^\top u|^2_g\leq 2(k-2) |\nabla^\top u|^2_g$. The last equality follows from $\nu_{\tg}(u)=e^{-u}\nu_{g}(u)$ and $d\tilde{a}=e^{(k-1)u}da$.
\end{proof}

We can finally use the computations that we have carried out to prove our main results: \cref{maintheorem} and \cref{maincor}. 
\begin{proof}[Proof of \cref{maintheorem} and \cref{maincor}]\label{proof}

Let $\Sigma$ be a free boundary minimal submanifold of $(\Omega, \tg)$ with conformal factor $e^{2u}$. All we need to show is that $\tdelta^2\Sigma(X,X) = Q_{\tg}(X,X) < 0$ for some vector field $X\in\Gamma(N\Sigma)$, where $\tdelta^2\Sigma$ is the second variation of $\Sigma$ as a submanifold of $ (\Omega, \tg)$. Obviously, it is enough to show that:
\[
\sum_{\ell=1}^n  \tdelta^2\Sigma(\tE_\ell,\tE_\ell)<0,
\]
where $\{E_1,\ldots,E_n\}$ is an arbitrary orthonormal basis of $\mathbb R^n$ and $\tE_\ell = e^{-u}E_\ell$.

We first prove \cref{maintheorem}. Combining \cref{ST,inequality}, we obtain
\begin{equation}\label{ineq}
            \sum_{\ell=1}^n\tdelta^2\Sigma(E_\ell^\perp,E_\ell^\perp) \leq -(n-k-2)\int_{\partial\Sigma} \nu_{\tg}(u)d\ta + \sum_{\ell=1}^n\int_{\partial\Sigma}e^{-u}g(\alpha_{\partial\Omega}(E_\ell^\perp,E_\ell^\perp),\nu_g)d\ta.
\end{equation}
Now, we can estimate the first term using \cref{ST} and the $(n-k)$-convexity of $\partial\Omega$ with respect to $\tg$ (note that $p\leq n-k$, hence $p$-convexity implies $(n-k)$-convexity). Indeed, these give
        \begin{equation}\label{ineq2}
            0 \geq \sum_{\ell=1}^n\tg(\tnabla_{\tE_\ell^\perp}\tE_\ell^\perp,\nu_{\tg}) = \sum_{\ell=1}^nT_{\tg}(\tE_\ell^\perp,\tE_\ell^\perp) = - (n-k)\nu_{\tg}(u) +  \sum_{\ell=1}^n e^{-u}g(\alpha_{\partial\Omega}(E_\ell^\perp,E_\ell^\perp),\nu_{g}),
        \end{equation}
    which implies
        \[\sum_{\ell=1}^n\tdelta^2\Sigma(E_\ell^\perp,E_\ell^\perp) \leq \frac{2}{n-k}\sum_{\ell=1}^n\int_{\partial\Sigma}e^{-u}g(\alpha_{\partial\Omega}(E_\ell^\perp,E_\ell^\perp),\nu_g)d\ta \leq 0.\]
    If $\tg$ has positive sectional curvature or $\partial\Omega$ is strictly $(n-k)$-convex with respect to $g$ or $\tg$, then the previous inequality is strict. This concludes the proof of \cref{maintheorem}.

We now prove \cref{maincor}. If $\partial\Omega$ is $(n-k)$-convex with respect to $g$ and $\nu_{\tg}(u)=e^{-u}\nu_{g}(u)>0$, it is straightforward to see from \eqref{ineq2} that $\partial\Omega$ is strictly $(n-k)$-convex with respect to $\tg$. In the same way, if $\partial\Omega$ is $(n-k)$-convex with respect to $\tg$ and $\nu_{\tg}(u)=e^{-u}\nu_{g}(u)<0$, then $\partial\Omega$ is strictly $(n-k)$-convex with respect to $g$.
\end{proof} 

\bibliographystyle{plain}
\bibliography{references}
\end{document}